\documentclass[a4paper,11pt]{article}
\usepackage{amssymb,amsmath,amsthm,latexsym}
\usepackage{amsfonts}
	\usepackage{amsfonts}
\usepackage{graphicx}
\usepackage[pdftex,bookmarks,colorlinks=false]{hyperref}
\usepackage{verbatim}
\usepackage{caption}
\usepackage{subcaption}
\usepackage[hmargin=1.15in,vmargin=1.15in]{geometry}
%\usepackage{setspace}
%\doublespacing

\newtheorem{theorem}{Theorem}[section]

\newtheorem{definition}[theorem]{Definition}

\newtheorem{lemma} [theorem]{Lemma}

\setlength{\parskip}{2.5pt}

\title{\bf Weak Integer Additive Set-Indexers of Certain Graph Products}

\author{{\bf N K Sudev \footnote{Department of Mathematics, Vidya Academy of Science \& Technology, Thalakkottukara, Thrissur - 680501, Kerala, India. email: {\em sudevnk@gmail.com}}} and {\bf K A Germina\footnote{Department of Mathematics, School of Mathematical \& Physical Sciences, Central University of Kerala, Kasaragod-671316, Kerala, India. email:{\em srgerminaka@gmail.com}}}}
\date{}
\begin{document}
\maketitle

\begin{abstract}
Let $\mathbb{N}_0$ be the set of all non-negative integers and $\mathcal{P}(\mathbb{N}_0)$ be its power set. An integer additive set-indexer (IASI) is defined as an injective function $f:V(G)\to \mathcal{P}(\mathbb{N}_0)$ such that the induced function $f^+:E(G) \to \mathcal{P}(\mathbb{N}_0)$ defined by $f^+ (uv) = f(u)+ f(v)$ is also injective, where $f(u)+f(v)$ is the sumset of $f(u)$ and $f(v)$. An IASI $f$ is said to be a weak IASI if $|f^+(uv)|=max(|f(u)|,|f(v)|)~\forall ~ uv\in E(G)$. In this paper, we study the admissibility of weak IASI by certain graph products of two weak IASI graphs.
\end{abstract}

\noindent \textbf{Key Words:} Integer additive set-indexers, mono-indexed elements of a graph, weak integer additive set-indexers, sparing number of a graph.
\newline
\textbf{AMS Subject Classification: 05C78}

\section{Introduction}
For all  terms and definitions, not defined specifically in this paper, we refer to \cite{FH} and \cite{HIS}. Unless mentioned otherwise, all graphs considered here are simple, finite and have no isolated vertices.

Let $\mathbb{N}_0$ denotes the set of all non-negative integers. For all $A, B \subseteq \mathbb{N}_0$, the {\em sum set} of these sets is denoted by  $A+B$ and is defined by $A + B = \{a+b: a \in A, b \in B\}$. If either $A$ or $B$ is countably infinite, then their sum set is also countably infinite. Hence, the sets we consider here are all finite sets of non-negative integers. The cardinality of a set $A$ is denoted by $|A|$. 

An {\em integer additive set-indexer} (IASI) is defined in \cite{GA} as an injective function $f:V(G)\to \mathcal{P}(\mathbb{N}_0)$ such that the induced function $f^+:E(G) \to \mathcal{P}(\mathbb{N}_0)$ defined by $f^+ (uv) = f(u)+ f(v)$ is also injective.

\begin{lemma}
\cite{GS1} If $f$ is an IASI on a graph $G$, then $max\,(|f(u)|,\,|f(v)|) \le f^+(uv) \le |f(u)||f(v)|, \forall ~ u,v\in  V(G)$.
\end{lemma}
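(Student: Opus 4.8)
The plan is to prove the two inequalities separately, working purely with the sumset of two finite nonempty subsets $A = f(u)$ and $B = f(v)$ of $\mathbb{N}_0$; the graph structure plays no role beyond guaranteeing, via the definition of a set-indexer, that the labels of the vertices are finite nonempty sets. Since $uv \in E(G)$ exactly means $A$ and $B$ are the endpoint labels, it suffices to establish $\max(|A|,|B|) \le |A+B| \le |A|\,|B|$ for all such $A,B$.

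For the upper bound I would argue that $A+B = \{a+b : a \in A,\, b\in B\}$ is, by definition, the image of the addition map $A \times B \to \mathbb{N}_0$ given by $(a,b) \mapsto a+b$. Since the image of a function has cardinality at most that of its domain, $|A+B| \le |A \times B| = |A|\,|B|$, which is the right-hand inequality.

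For the lower bound I would fix an element $b_0 \in B$ (possible because $B \neq \emptyset$) and observe that the translate $A + b_0 = \{a+b_0 : a \in A\}$ is contained in $A+B$. The translation map $x \mapsto x + b_0$ is injective on $\mathbb{N}_0$, so $|A+b_0| = |A|$, and hence $|A| \le |A+B|$. Running the symmetric argument with a fixed $a_0 \in A$ yields $|B| \le |A+B|$, so $\max(|A|,|B|) \le |A+B|$. Combining the two estimates gives the claimed chain $\max(|f(u)|,|f(v)|) \le |f^+(uv)| \le |f(u)|\,|f(v)|$.

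There is essentially no serious obstacle here; the argument is an elementary counting fact about sumsets. The only point that needs a word of care is that both $f(u)$ and $f(v)$ must be nonempty, which is guaranteed by the convention that a set-indexer assigns nonempty subsets of $\mathbb{N}_0$ to the vertices (otherwise the left inequality could degenerate). Once that is noted, both bounds follow immediately as above.
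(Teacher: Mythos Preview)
Your argument is correct and is the standard elementary proof of this sumset inequality; there is nothing to fix. Note, however, that the paper does not actually prove this lemma: it is stated with a citation to \cite{GS1} and no proof is given, so there is no in-paper argument to compare against. Your approach is exactly the one that would appear in that reference (or in any additive combinatorics text): the upper bound via the surjection $A\times B\to A+B$, and the lower bound via the injective translates $A+b_0\subseteq A+B$ and $a_0+B\subseteq A+B$.
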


In \cite{GS1}, an IASI $f$ is said to be a {\em weak IASI} if $|f^+(uv)|=max\,(|f(u)|,|f(v)|)$ for all $u,v\in V(G)$. A weak IASI $f$ is said to be {\em weakly uniform IASI} if $|f^+(uv)|=k$, for all $u,v\in V(G)$ and for some positive integer $k$.  A graph which admits a weak IASI may be called a {\em weak IASI graph}. It is to be noted that if $G$ is a weak IASI graph, then every edge of $G$ has at least one mono-indexed end vertex. The cardinality of the labeling set of an element (vertex or edge) of a graph $G$ is called the {\em set-indexing number} of that element. An element (a vertex or an edge) of graph which has the set-indexing number $1$ is called a {\em mono-indexed element} of that graph. The {\em sparing number} of a graph $G$ is defined to be the minimum number of mono-indexed edges required for $G$ to admit a weak IASI and is denoted by $\varphi(G)$. The following are some major results proved in \cite{GS3}.

\begin{theorem}\label{T-WSG}
\cite{GS3} A subgraph of weak IASI graph is also a weak IASI graph.
\end{theorem}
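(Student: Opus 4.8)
\section*{Proof proposal}

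The plan is to show that the weak IASI of the ambient graph, when restricted to the vertex set of the subgraph, is itself a weak IASI. So let $G$ be a weak IASI graph with weak IASI $f\colon V(G)\to\mathcal{P}(\mathbb{N}_0)$, and let $H$ be an arbitrary subgraph of $G$. Define $g\colon V(H)\to\mathcal{P}(\mathbb{N}_0)$ by $g=f|_{V(H)}$, the restriction of $f$ to $V(H)$; the claim will be that $g$ is a weak IASI of $H$.

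First I would verify that $g$ is an IASI of $H$. Since $f$ is injective and $V(H)\subseteq V(G)$, the restriction $g$ is injective. The induced edge function $g^{+}$ on $E(H)$ satisfies $g^{+}(uv)=g(u)+g(v)=f(u)+f(v)=f^{+}(uv)$ for every $uv\in E(H)$, and since $E(H)\subseteq E(G)$ this means $g^{+}$ is precisely the restriction of $f^{+}$ to $E(H)$; being a restriction of the injective map $f^{+}$, it is injective. Hence $g$ is an IASI of $H$.

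Next I would check the weakness condition. For every edge $uv\in E(H)$, the pair $u,v$ are adjacent in $G$ as well, so the weakness of $f$ gives $|g^{+}(uv)|=|f^{+}(uv)|=\max(|f(u)|,|f(v)|)=\max(|g(u)|,|g(v)|)$. Thus $g$ meets the defining condition of a weak IASI on $H$, and therefore $H$ admits a weak IASI.

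As for the main obstacle: there really is none of substance, since the whole argument is a ``restriction of a labeling'' argument. The only points needing any care are the two invocations of the fact that a restriction of an injective function stays injective --- once for $f$ on the vertices and once for $f^{+}$ on the edges --- together with the elementary observation that each edge of $H$, with its two endpoints, already lives in $G$, so the max-cardinality condition is inherited edge by edge. One may also note that it is irrelevant whether $H$ is a spanning, induced, or arbitrary subgraph, because the weakness condition constrains only edges and their end vertices, so the same restricted labeling works in every case.
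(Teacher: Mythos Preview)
Your proof is correct. The paper does not actually supply its own proof of this statement --- it quotes the result from \cite{GS3} --- but your restriction argument is exactly the natural one, and indeed the paper implicitly relies on the same idea later (in the proof of Theorem~\ref{T-WIASI-GPCP2}, where the restriction $f'=f|_{G_{1i}}$ is declared a weak IASI by appeal to this theorem).
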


\begin{theorem}\label{T-WUC}
\cite{GS3} A graph $G$ admits a weak IASI if and only if $G$ is bipartite or it has at least one mono-indexed edge. 
\end{theorem}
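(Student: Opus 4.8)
The plan is to prove the two implications separately, with the elementary sumset bound $|A+B|\ge |A|+|B|-1$, valid for all nonempty finite $A,B\subseteq\mathbb{N}_0$, doing most of the work. First I would record its consequence for weak IASIs: if $f$ is a weak IASI and $uv\in E(G)$, then $\max(|f(u)|,|f(v)|)=|f(u)+f(v)|\ge |f(u)|+|f(v)|-1$, which forces $\min(|f(u)|,|f(v)|)=1$, so every edge of a weak IASI graph has at least one mono-indexed end (the remark already noted in the text). This single fact powers both directions.

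For the ``if'' direction, suppose first that $G$ is bipartite with parts $X$ and $Y$. I would label the vertices of $X$ by distinct singletons $\{x_i\}$ and the vertices of $Y$ by distinct sets $\{0,y_j\}$ with the $y_j\ge 1$ all distinct. Then $f$ is injective, every edge $x_iy_j$ has $f^+(x_iy_j)=\{x_i,\,x_i+y_j\}$ of cardinality $2=\max(|f(x_i)|,|f(y_j)|)$, and comparing the minima and maxima of these $2$-sets shows $f^+$ is injective; hence $G$ admits a weak IASI, indeed one with no mono-indexed edge. If instead all we need is some weak IASI in which an edge is mono-indexed, the all-singleton labeling $f(v_i)=\{2^{\,i}\}$ works for every simple graph, since $2^i+2^j$ determines $\{i,j\}$; this settles the remaining case of the ``if'' part.

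For the ``only if'' direction, assume $G$ admits a weak IASI $f$ but $G$ is not bipartite, and produce a mono-indexed edge as follows. Let $V_1$ be the set of mono-indexed vertices and $V_2=V(G)\setminus V_1$. By the consequence of the sumset bound, no edge joins two vertices of $V_2$, so $V_2$ is independent; if $V_1$ were also independent then $(V_1,V_2)$ would be a bipartition of $G$, a contradiction. Hence some edge $uv$ has both ends in $V_1$, so $|f(u)|=|f(v)|=1$ and $|f^+(uv)|=\max(1,1)=1$, i.e.\ $uv$ is mono-indexed. Equivalently, every weak IASI of a non-bipartite graph carries a mono-indexed edge.

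The one genuinely delicate point is the constructive half: one must verify that the \emph{induced} map $f^+$ is injective, not merely that each edge receives a label of the correct cardinality. I expect this to be the main thing to get right, and it is handled above by choosing the underlying integers rigidly enough --- powers of two, or the ``$\{0,y_j\}$ on one side'' device --- so that distinct edges are forced to distinct sumsets; a less delicate alternative is to draw all the integers from a Sidon set. Everything else is bookkeeping around the inequality $|A+B|\ge|A|+|B|-1$.
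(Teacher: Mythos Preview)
The statement you are proving is Theorem~1.3, which the present paper merely quotes from \cite{GS3}; there is no proof in this paper to compare against. So the relevant question is simply whether your argument stands on its own, and it does. Your use of the elementary bound $|A+B|\ge |A|+|B|-1$ to force $\min(|f(u)|,|f(v)|)=1$ on every edge is exactly the observation the paper records informally just before Theorem~1.2, and from it your ``only if'' direction (partition into mono-indexed and non-mono-indexed vertices, then argue that non-bipartiteness forces an edge inside the mono-indexed part) is clean and correct.

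Your constructive ``if'' direction is also correct and, pleasantly, more careful than the paper's own style elsewhere: you actually verify injectivity of $f^+$, which the paper's later proofs tend to gloss over. The bipartite construction with singletons $\{x_i\}$ on one side and sets $\{0,y_j\}$ on the other works as you say (the minimum of the edge-label recovers $x_i$, the maximum then recovers $y_j$), and the all-singleton labeling by distinct powers of $2$ is a tidy way to handle the remaining case. One small remark: the theorem as stated is genuinely ambiguous, since ``has a mono-indexed edge'' is a property of a labeling rather than of the abstract graph; your reading --- that $\varphi(G)=0$ iff $G$ is bipartite, and that every graph admits \emph{some} weak IASI --- is the only sensible interpretation and is consistent with how the paper uses the result downstream.
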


\begin{theorem}\label{T-WUOC}
\cite{GS3} An odd cycle $C_n$ has a weak IASI if and only if it has at least one mono-indexed edge. 
\end{theorem}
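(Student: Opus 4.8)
\noindent\emph{Proof sketch / plan.}
The plan is to obtain this as the specialisation of Theorem~\ref{T-WUC} to an odd cycle, and then to supplement it with a direct, self-contained argument for each implication. Since an odd cycle $C_n$ is not bipartite, Theorem~\ref{T-WUC} immediately gives the stated equivalence; the work below just re-derives the two halves concretely, which is what I would actually write down.

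For the \emph{necessity} (``only if'') part, I would suppose that $f$ is a weak IASI on $C_n$ and argue by contradiction that not every edge can fail to be mono-indexed. As recalled in the introduction, every edge of a weak IASI graph has at least one mono-indexed end vertex, so the set $S=\{v\in V(C_n):|f(v)|=1\}$ of mono-indexed vertices is a vertex cover of $C_n$. If no edge of $C_n$ were mono-indexed, then no edge could have both ends in $S$ (such an edge would have set-indexing number $1$); hence $S$ would also be an independent set. But a graph possessing an independent vertex cover is bipartite, with bipartition $\bigl(S,\ V(C_n)\setminus S\bigr)$, contradicting the fact that $C_n$ is an odd cycle. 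Therefore at least one edge of $C_n$ must be mono-indexed under $f$.

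For the \emph{sufficiency} (``if'') part, I would exhibit a weak IASI of $C_n$ carrying exactly one mono-indexed edge, which in particular shows $C_n$ admits a weak IASI once a mono-indexed edge is permitted. Writing $V(C_n)=\{v_1,\dots,v_n\}$ with $v_iv_{i+1}\in E(C_n)$ (subscripts modulo $n$), note that since $n$ is odd the vertices of even index are pairwise non-adjacent and each of their neighbours has odd index. Set $f(v_i)=\{i\}$ for every odd $i$ and $f(v_i)=\{i,\,i+n\}$ for every even $i$. Then $f$ is injective, the edge $v_nv_1$ has $f^+(v_nv_1)=\{n+1\}$ and so is mono-indexed, and every other edge joins a singleton-labelled vertex to a $2$-element-labelled vertex, giving set-indexing number $\max(1,2)=2$; one checks that the resulting sumsets $\{a,a+n\}$ have pairwise distinct minima, so $f^+$ is injective. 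Hence $f$ is a weak IASI with precisely one mono-indexed edge.

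There is no deep obstacle here: the only points requiring care are verifying injectivity of $f$ and of $f^+$ for the chosen labels, and recalling why a weak IASI forces every edge to have a mono-indexed end (which is what makes $S$ a vertex cover). The conceptual crux is the elementary observation that ``no mono-indexed edge'' would force the mono-indexed vertices to be simultaneously a vertex cover and an independent set, hence make $C_n$ bipartite --- impossible for an odd cycle.
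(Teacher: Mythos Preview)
The paper does not actually prove this theorem: it is quoted from \cite{GS3} as a background result, with no argument supplied. So there is nothing to compare against beyond the observation---which you make at the outset---that the statement is the specialisation of Theorem~\ref{T-WUC} to a non-bipartite graph.

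Your self-contained argument is correct. The necessity direction is clean: under a weak IASI the mono-indexed vertices form a vertex cover, and if no edge were mono-indexed this cover would also be independent, forcing a bipartition---impossible for an odd cycle. For sufficiency, your explicit labelling works: with $f(v_i)=\{i\}$ for odd $i$ and $f(v_i)=\{i,i+n\}$ for even $i$, one gets $f^+(v_iv_{i+1})=\{2i+1,\,2i+1+n\}$ for $1\le i\le n-1$ and $f^+(v_nv_1)=\{n+1\}$; the minima $3,5,\dots,2n-1$ are distinct odd integers and $n+1$ is even, so $f^+$ is injective. The only edge with both ends mono-indexed is $v_nv_1$, giving a weak IASI with exactly one mono-indexed edge. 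Nothing is missing.
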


\begin{theorem}\label{T-NME}
\cite{GS3} Let $C_n$ be a cycle of length $n$ which admits a weak IASI, for a positive integer $n$. Then, $C_n$ has an odd number of mono-indexed edges when it is an odd cycle and has even number of mono-indexed edges, when it is an even cycle. 
\end{theorem}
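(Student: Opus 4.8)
The plan is to convert the statement into a parity count on the vertices. Recall that for a weak IASI $f$ we have $|f^{+}(uv)|=\max(|f(u)|,|f(v)|)$ for every edge $uv$, so an edge is mono-indexed exactly when \emph{both} of its end vertices have set-indexing number $1$, and it fails to be mono-indexed exactly when at least one of its end vertices has set-indexing number greater than $1$. The first step is to record the structural fact noted in the Introduction, that every edge of a weak IASI graph has at least one mono-indexed end vertex; applied to $C_{n}$ this says that the set $S$ of vertices of $C_{n}$ whose set-indexing number exceeds $1$ is an \emph{independent set} of $C_{n}$.

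The second step is a direct count. Put $m=|S|$ and traverse the cycle $C_{n}=v_{1}v_{2}\cdots v_{n}v_{1}$ once. If $m\ge 1$, the $m$ vertices of $S$ split the remaining $n-m$ mono-indexed vertices into $m$ maximal cyclic arcs of consecutive mono-indexed vertices, each arc nonempty precisely because $S$ is independent (no two vertices of $S$ are adjacent); if $m=0$ there is a single arc consisting of all $n$ vertices. An edge of $C_{n}$ is mono-indexed if and only if it lies strictly inside one of these arcs (its end vertices on either side being in $S$, or bounding the whole cycle when $m=0$). Hence, writing $\ell_{1},\dots,\ell_{m}$ for the arc lengths with $\sum_{j}\ell_{j}=n-m$, the number of mono-indexed edges is $\sum_{j=1}^{m}(\ell_{j}-1)=(n-m)-m=n-2m$ when $m\ge 1$, and it is $n$ when $m=0$; in both cases it equals $n-2m$.

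The third step is to read off the parity: $n-2m\equiv n\pmod 2$, so the number of mono-indexed edges of $C_{n}$ has the same parity as $n$, i.e.\ it is odd when $C_{n}$ is an odd cycle and even when $C_{n}$ is an even cycle, as claimed. I do not expect a genuine obstacle; the only delicate point is the degenerate bookkeeping (the case $m=0$, and the arc that wraps around the edge $v_{n}v_{1}$), which the "cyclic arcs between elements of $S$" formulation handles uniformly. An alternative would be an induction on $m$, deleting a vertex of $S$ together with its two incident edges (both non-mono-indexed) and joining its two neighbours by a new, necessarily mono-indexed, edge to obtain $C_{n-1}$: this decreases $n$ by $1$ and increases the mono-indexed edge count by $1$, preserving parity. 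The direct count above is cleaner and is the route I would take.
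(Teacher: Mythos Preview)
Your argument is correct: the key observations that (a) an edge is mono-indexed precisely when both its end vertices are mono-indexed, and (b) the non--mono-indexed vertices form an independent set $S$ in $C_n$, together yield the exact count $n-2m$ of mono-indexed edges (with $m=|S|$), from which the parity claim is immediate. The arc decomposition is sound, and the degenerate case $m=0$ is handled correctly.

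However, there is nothing to compare against in this paper. Theorem~\ref{T-NME} is quoted here as a known result from \cite{GS3} and is stated without proof; the present paper uses it only as background. So while your proof is valid (and indeed a standard way to establish the result), the paper itself supplies no argument with which to contrast your approach. If anything, one could shorten your count slightly by noting directly that each of the $m$ vertices of $S$ is incident to exactly two edges, all distinct because $S$ is independent, so exactly $2m$ edges fail to be mono-indexed and $n-2m$ are mono-indexed; this bypasses the arc bookkeeping entirely. Your proposed inductive alternative is morally fine for the parity statement, but note that contracting to $C_{n-1}$ may spoil injectivity of the induced edge function $f^{+}$, so it is not literally a weak IASI on the smaller cycle without some relabeling; the direct count avoids this nuisance, as you say.
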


\begin{theorem}\label{T-WUG}
\cite{GS4} The graph $G_1\cup G_2$ admits a weak IASI if and only if both $G_1$ and $G_2$ are weak IASI graphs. 
\end{theorem}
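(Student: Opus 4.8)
The plan is to establish the two implications separately. \emph{Forward direction:} suppose $G_1\cup G_2$ admits a weak IASI $f$. Since $G_1$ and $G_2$ are both subgraphs of $G_1\cup G_2$, Theorem \ref{T-WSG} shows that the restrictions of $f$ to $G_1$ and to $G_2$ are again weak IASIs, so $G_1$ and $G_2$ are weak IASI graphs; no further work is needed here.

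\emph{Converse:} suppose $f_1$ and $f_2$ are weak IASIs on $G_1$ and $G_2$ respectively. I would not attempt to glue $f_1$ and $f_2$ directly, since they may disagree on $V(G_1)\cap V(G_2)$ and need not be jointly injective; instead I would keep only the structural data they encode. By the remark following Theorem \ref{T-WUC}, under $f_i$ every edge of $G_i$ has a mono-indexed end, so the set $J_i$ of vertices of $G_i$ whose set-indexing number is at least $2$ is independent in $G_i$. The key observation is that $J:=J_1\cap J_2$ is independent in $G:=G_1\cup G_2$ and, moreover, $V(G)\setminus J$ meets every edge of $G$: any edge of $G$ lies in $E(G_1)$ or in $E(G_2)$, and in the former case it has an end outside $J_1\supseteq J$, in the latter an end outside $J_2\supseteq J$.

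Given $J$, I would define a fresh labeling $f$ on $G$ by assigning a two-element subset of $\mathbb{N}_0$ to each vertex of $J$ and a singleton to each vertex of $V(G)\setminus J$, with all the integers involved drawn from a sufficiently sparse (Sidon-type) subset of $\mathbb{N}_0$ so that $f$ is injective on $V(G)$ and the sums $f^+(uv)=f(u)+f(v)$ are pairwise distinct over $E(G)$. For every edge $uv$ of $G$ at least one end lies outside $J$ and is therefore mono-indexed, which forces $|f^+(uv)|=\max(|f(u)|,|f(v)|)$. Hence $f$ is a weak IASI on $G_1\cup G_2$.

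The substance of the statement is really the hereditary forward direction; the converse is soft, since $G_1\cup G_2$ inherits from $G_1$ and $G_2$ the property of having no isolated vertices, and one could equally well exhibit a weak IASI by the blunt choice of the distinct singleton labels $\{2^1\},\{2^2\},\dots$ on the vertices (the relevant powers of $2$ forming a Sidon set). The only point that needs care in any version of the argument is global injectivity of $f$ and of $f^+$ across the overlap $G_1\cap G_2$, and passing to the independent set $J$ of the union and relabeling from scratch is the cleanest way to secure it.
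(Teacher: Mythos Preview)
The paper does not actually prove Theorem~\ref{T-WUG}: it is quoted from \cite{GS4} as a background result, so there is no in-paper argument to compare your proposal against. Your forward direction is exactly the intended one-line application of Theorem~\ref{T-WSG}.

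Your converse is correct, and in fact your closing remark identifies the cleanest route: labeling all vertices of $G_1\cup G_2$ by distinct singletons $\{2^i\}$ already yields a (1-uniform) weak IASI on \emph{any} finite simple graph, since pairwise sums of distinct powers of $2$ are distinct. The more elaborate construction via $J=J_1\cap J_2$ is sound but unnecessary for the bare existence statement; it would only matter if one were tracking sparing numbers, which Theorem~\ref{T-WUG} does not. One small point of care in that version: to guarantee that $f^+$ is injective when some labels are doubletons you need slightly more than a $B_2$ (Sidon) set---you need all the pairwise sums arising in the edge-labels to be distinct as \emph{sets}, which is most easily secured by again taking powers of $2$ rather than an arbitrary Sidon set.
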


\begin{theorem}\label{T-WKN}
\cite{GS3} The complete graph $K_n$ admits a weak IASI if and only if the minimum number of mono-indexed edges of $K_n$ is $\frac{1}{2}(n-1)(n-2)$.
\end{theorem}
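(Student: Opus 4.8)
The plan is to show that $\frac12(n-1)(n-2)=\binom{n-1}{2}$ is simultaneously a lower bound for, and an attainable value of, the number of mono-indexed edges of a weak IASI of $K_n$; since $K_n$ trivially admits \emph{some} weak IASI (label every vertex by a distinct singleton, using a Sidon set to keep $f^+$ injective), this common value is the real content of the statement and equals $\varphi(K_n)$. Throughout, call a vertex \emph{heavy} if its set-indexing number is at least $2$, and note that the small cases $n\le 2$ are immediate, the formula then reading $0$.

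For the lower bound, let $f$ be any weak IASI of $K_n$. As noted earlier, every edge of a weak IASI graph has a mono-indexed end vertex, so no two heavy vertices are adjacent; since $K_n$ is complete, $K_n$ therefore has at most one heavy vertex, hence at least $n-1$ mono-indexed vertices. Any edge $uw$ joining two mono-indexed vertices satisfies $|f^+(uw)|=\max(1,1)=1$ by the weak IASI condition and is thus mono-indexed, and the $n-1$ mono-indexed vertices span a copy of $K_{n-1}$. This already forces at least $\binom{n-1}{2}=\frac12(n-1)(n-2)$ mono-indexed edges, so $\varphi(K_n)\ge\frac12(n-1)(n-2)$.

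For attainability I would exhibit a matching construction. Fix distinct non-negative integers $a_1,\dots,a_{n-1}$ with all pairwise sums $a_i+a_j$ ($i\ne j$) distinct, e.g.\ $a_i=2^{i}$, label vertices $u_1,\dots,u_{n-1}$ of $K_n$ by the singletons $f(u_i)=\{a_i\}$, and label the remaining vertex $v$ by $f(v)=\{0,1\}$. Then $f$ is injective on $V(K_n)$; every edge $u_iu_j$ has set-indexing number $1$ and every edge $vu_i$ has set-indexing number $2=\max(1,2)$, so the weak IASI cardinality condition holds and exactly $\binom{n-1}{2}$ edges are mono-indexed. Injectivity of $f^+$ follows because the singleton edge-labels $\{a_i+a_j\}$ are pairwise distinct by the Sidon property, the two-element edge-labels $f(v)+f(u_i)=\{a_i,a_i+1\}$ are pairwise distinct since the $a_i$ are, and a singleton can never coincide with a two-element set. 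Combined with the lower bound, this gives $\varphi(K_n)=\frac12(n-1)(n-2)$.

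The only genuinely nontrivial point is the injectivity of $f^+$ restricted to the mono-indexed $K_{n-1}$: labelling the $u_i$ by arbitrary distinct integers may produce collisions such as $1+4=2+3$, so a Sidon-type choice of labels is essential. Everything else — the cardinality condition, injectivity on the edges at the heavy vertex, and the degenerate cases — is routine, and the lower-bound argument shows no weak IASI of $K_n$ can do better, so the extremal configuration (one heavy vertex, all others mono-indexed with Sidon labels) is essentially forced.
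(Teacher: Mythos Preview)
The paper does not prove this theorem; it is quoted from \cite{GS3} without argument, so there is nothing to compare your proof against here. That said, your proof is correct and is the natural one: since adjacent vertices cannot both be heavy, $K_n$ has at most one heavy vertex, forcing a mono-indexed $K_{n-1}$ and hence at least $\binom{n-1}{2}$ mono-indexed edges; and your Sidon-label construction with one heavy vertex realises this bound while keeping $f^+$ injective. Your emphasis on the Sidon condition is exactly the right point --- arbitrary distinct singleton labels would not suffice.
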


In this paper, we call a set $B$ an integral multiple of another set $A$ if every element of $B$ is an integral multiple of the corresponding element of $A$.

\section{Fundamental Products of Weak IASI Graphs}

In different products of given graphs, we need to take several layers or copies of some or all given graphs and to establish the adjacency between them according to certain rules. Hence, it may not be possible to induce a weak IASI to a graph product from the weak IASIs of the given graphs. We have to define an IASI independently for a graph product. We say that two copies of a graph are {\em adjacent} in a graph product if there exist some edges between the vertices of those copies in the graph product. 
In this section, we discuss the admissibility of weak IASI by the three fundamental products of two weak IASI graphs. First, consider the Cartesian product of two graphs which is defined as follows.

%\subsection{Cartesian Products of Weak IASI Graphs}

Let $G_1(V_1,E_1)$ and $G_2(V_2,E_2)$ be two graphs.Then, the {\em Cartesian product} of $G_1$ and $G_2$, denoted by $G_1\Box G_2$, is the graph with vertex set $V_1\Box V_2$  defined as follows. Let $u=(u_1, u_2)$ and $v=(v_1,v_2)$ be two points in $V_1\Box V_2$. Then, $u$ and $v$ are adjacent in $G_1\Box G_2$ whenever [$u_1=v_1$ and $u_2$ is adjacent to $v_2$] or [$u_2=v_2$ and $u_1$ is adjacent to $v_1$]. If $|V_i|=n_i$ and $|E_i|=m_i$ for $i=1,2$, then $|V(G_1\Box G_2)|=n_1n_2$  and $i=1,2$ and $|E(G_1\Box G_2)|=n_1m_2+n_2m_1$.

The Cartesian product of two bipartite graphs is also a bipartite graph. Also, the Cartesian products $G_1\Box G_2$ and $G_2\Box G_1$ of two graphs $G_1$ and $G_2$, are isomorphic graphs.

\begin{theorem}
Let $G_1$ and $G_2$ be two weak IASI graphs. Then, the product $G_1\Box G_2$ also admits a weak IASI. 
\end{theorem}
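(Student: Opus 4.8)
The statement is in fact an immediate consequence of two earlier results: $G_1\Box G_2$ has $n_1n_2$ vertices and hence is a subgraph of the complete graph $K_{n_1n_2}$, which admits a weak IASI by Theorem~\ref{T-WKN}, so Theorem~\ref{T-WSG} already yields a weak IASI on $G_1\Box G_2$. This argument, however, ignores the structure of $G_1$ and $G_2$ completely and produces a labelling that is useless for estimating $\varphi(G_1\Box G_2)$ later; so the approach I would actually take is to construct a weak IASI on $G_1\Box G_2$ explicitly from given weak IASIs $f_1$ of $G_1$ and $f_2$ of $G_2$. (Equivalently, one could argue via Theorem~\ref{T-WUC}: if both factors are bipartite then $G_1\Box G_2$ is bipartite and there is nothing to prove, so the only case with real content is when some factor is non-bipartite, and there it is enough to produce a single valid weak IASI.)

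For the construction, recall that, since $f_i$ is a weak IASI, the set $S_i$ of mono-indexed vertices of $G_i$ under $f_i$ is a vertex cover of $G_i$, because every edge of a weak IASI graph has a mono-indexed end. I would then define a labelling $f$ on $V(G_1\Box G_2)$ by declaring $(u,v)$ to be mono-indexed whenever $u\in S_1$ or $v\in S_2$, and by assigning to each of the remaining vertices --- the pairs $(u,v)$ with $u\notin S_1$ and $v\notin S_2$ --- a two-element set. The key observation is that these remaining vertices form an independent set of $G_1\Box G_2$: two adjacent vertices agree in exactly one coordinate and differ along an edge in the other, and if both were remaining vertices that edge would be an edge of $G_1$ with no end in $S_1$, or an edge of $G_2$ with no end in $S_2$, contradicting that $S_i$ is a vertex cover. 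Consequently every edge of $G_1\Box G_2$ has at least one mono-indexed end under $f$, so for any edge $xy$ the sumset $f(x)+f(y)$ is just a translate of the label of the non-mono-indexed end (or a single integer when both ends are mono-indexed), and therefore $|f^+(xy)|=\max(|f(x)|,|f(v)|)$; this is the weak condition, which thus holds automatically for the whole graph.

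The only remaining point --- and the one I expect to require the most care --- is to pick the actual integers so that $f$ is injective and the induced edge map $f^+$ is injective. Every edge image is either a single integer (when both ends are mono-indexed) or a pair of consecutive integers $\{n,n+1\}$ inherited from a two-element vertex label $\{n,n+1\}$. Since $G_1\Box G_2$ is finite, only finitely many coincidences have to be avoided, so I would choose the singleton values and the base points $n$ of the two-element sets greedily (for instance, drawing the singletons from a Sidon set and spacing the $n$'s far apart) so that all vertex labels are distinct and all edge sums are distinct; this produces a genuine weak IASI on $G_1\Box G_2$. I would end with the remark that, taking $S_1$ and $S_2$ to be \emph{minimum} mono-indexed vertex covers of $G_1$ and $G_2$, this construction also keeps the number of mono-indexed edges of $G_1\Box G_2$ under control, and is hence the natural starting point for the sparing-number computations that follow.
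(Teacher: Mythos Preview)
Your proposal is correct, and in fact your opening observation already settles the matter: since Theorem~\ref{T-WKN} gives a weak IASI on $K_{n_1n_2}$ and Theorem~\ref{T-WSG} passes it to any subgraph, \emph{every} finite graph admits a weak IASI, so the hypothesis on $G_1$ and $G_2$ is vacuous. Your constructive argument is also sound; the key step---that $(V(G_1)\setminus S_1)\times(V(G_2)\setminus S_2)$ is independent in $G_1\Box G_2$ because $S_1$ and $S_2$ are vertex covers---is exactly the right structural fact, and the injectivity clean-up, while sketched, is routine for finite graphs.

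The paper takes a different route. It fixes one factor, say $G_1$, takes $n_2$ copies of it indexed by $V(G_2)$, and alternates the labelling pattern: copies with odd index inherit the singleton/non-singleton pattern of $G_1$, copies with even index get the complementary pattern (with endpoints of mono-indexed edges of $G_1$ kept singleton in both parities). This is asymmetric in the two factors and implicitly leans on a $2$-colouring of $V(G_2)$; your vertex-cover product is more symmetric, more transparent, and---as you note---better suited to bounding $\varphi(G_1\Box G_2)$, since the mono-indexed edges are exactly those lying over an edge of $G_1$ covered by $S_1$ or an edge of $G_2$ covered by $S_2$. The paper's approach, on the other hand, stays closer to the ``copies of $G_1$'' picture used throughout the section and re-uses the given IASI of $G_1$ more directly.
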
 
\begin{proof}
Let $G_1$ and $G_2$ be two weak IASI graphs on $m$ and $n$ vertices respectively. 
%The Cartesian product $G_1\Box G_2$ may be viewed as follows. Make $n_2$ copies of $G_1$, denote by $G_{1i}$, where $1\le i \le n_2$. Now, join the corresponding vertices of two copies $G_{1i}$ and $G_{1j}$ if the corresponding vertices $v_i$ and $v_j$ are adjacent in $G_2$. 
%
We can view the product $G_1\Box G_2$ as a union of $n_2$ copies of $G_1$ and a finite number of edges connecting the corresponding vertices of two copies $G_{1i}$ and $G_{1j}$ of $G_1$ according to the adjacency of the corresponding vertices $v_i$ and $v_j$ in $G_2$, where $1\le i\neq j\le n_2$.

Let $u_{ij}$ be the $i$-th vertex of $G_{1j}$, the $j$-the copy of $G_1$. For odd values of $j$, label the vertices of $G_{1j}$ in such a way that the corresponding vertices of $G_{1j}$ have the same type of set-labels as that of $G_1$. That is, for odd $j$, let $u_{ij}$ has singleton set-label (or non-singleton set-label) according as the corresponding vertex $u_i$ of $G_1$ has singleton set-label (or non-singleton set-label). 

Let $u_i$ be not an end vertex of a mono-indexed edge In $G_1$. Then, for even values of $j$, label the corresponding vertex $u_{ij}$ in such a way that $u_{ij}$ has a singleton set-label (or non-singleton set-label) according as the vertex $u_i$ of $G_1$ has non-singleton set-label (or singleton set-label). Also, label the vertices of $G_{1j}$ which are corresponding to the adjacent mono-indexed vertices of $G_1$, by singleton sets. This labeling is a weak IASI for the graph $G_1\Box G_2$.                
\end{proof}

If a graph $G$ is the Cartesian product of two graphs $G_1$ and $G_2$, then $G_1$ and $G_2$ are called the {\em factors} of $G$. A graph is said to be {\em prime} with respect to a given graph product if it is non-trivial and can not be represented as the product of two non trivial graphs. 

\begin{theorem}\label{T-WIASI-GPCP2}
Let $G$ is a non-prime graph which admits a weak IASI. Then, every factor of $G$ also admits a weak IASI.
\end{theorem}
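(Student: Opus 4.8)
The plan is to reduce the statement to the already-established fact (Theorem~\ref{T-WSG}) that every subgraph of a weak IASI graph is again a weak IASI graph. The whole argument then hinges on the observation that each factor of $G$ occurs, up to isomorphism, as a subgraph of $G$ itself.

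First I would unpack the hypothesis: since $G$ is non-prime with respect to the Cartesian product, $G\cong G_1\Box G_2$ for some non-trivial graphs $G_1$ and $G_2$, and a \emph{factor} of $G$ is any such $G_1$ or $G_2$. Next I would exhibit the standard layers. Fixing a vertex $b\in V(G_2)$, the vertex set $\{(a,b):a\in V(G_1)\}$ together with the edges $(a,b)(a',b)$ with $aa'\in E(G_1)$ is a subgraph of $G_1\Box G_2$ isomorphic to $G_1$; this uses only the first disjunct of the Cartesian-product adjacency rule recalled above (the case $u_2=v_2=b$), and in fact the layer is an induced subgraph. Symmetrically, fixing $a\in V(G_1)$ yields a copy of $G_2$ inside $G$. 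Hence both factors are isomorphic to subgraphs of $G$.

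Now I would invoke Theorem~\ref{T-WSG}: as $G$ admits a weak IASI, so does every subgraph of $G$, and in particular so do the copies of $G_1$ and $G_2$ just described; transporting the restricted labelling across the isomorphism gives a weak IASI on $G_1$ and on $G_2$. The same layer argument applies verbatim to each factor when $G$ is written as an iterated Cartesian product $H_1\Box H_2\Box\cdots\Box H_k$, so the claim ``every factor admits a weak IASI'' is fully covered.

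I do not expect a genuine obstacle here; the only point requiring care is the routine verification that a layer of $G_1\Box G_2$ is an induced subgraph isomorphic to $G_1$, which is immediate from the definition of the Cartesian product. One could instead argue via Theorem~\ref{T-WUC} (a factor inherits bipartiteness, or inherits a mono-indexed edge from a suitable layer), but the subgraph route is shorter and avoids a case split.
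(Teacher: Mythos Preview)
Your proposal is correct and matches the paper's own proof essentially step for step: the paper also fixes a layer $G_{1i}\subseteq G$ isomorphic to the factor $G_1$, restricts the given weak IASI $f$ to this subgraph via Theorem~\ref{T-WSG}, and then transports the restriction across the isomorphism to obtain a weak IASI on $G_1$. The only cosmetic difference is that you spell out the layer construction and the iterated-product case more explicitly than the paper does.
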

\begin{proof}
Let $G$ be a non-prime weak IASI graph with a weak IASI $f$. If $G_1$ is a factor of $G$, then there is a subgraph $G_{1i}$ in $G$ which is isomorphic to $G_1$ such that $v_i$ is the  vertex of $G_1$ corresponding to the vertex $v$ of $G_1$. define a function $g:V(G_1) \to \mathcal{P}(\mathbb{N}_0)$ defined by $g(v)=f'(v_i)$ where $f'= f|_{G_{1i}}$, the restriction of $f$ to the subgraph $G_{1i}$. By Theorem \ref{T-WSG}, $f'$ is a weak IASI of $G_{1i}$ and hence $g$ is a weak IASI of $G_1$.
\end{proof}

Next, recall the definition of another graph product called the direct product of two graphs.
%\subsection{Direct Product of Weak IASI Graphs}

\begin{definition}{\rm 
\cite{HIS} The {\em directed product} of two graphs $G_1$ and $G_2$, is the graph whose vertex set is $V(G_1)\times V(G_2)$ and for which the vertices $(u,v)$ and $(u',v')$ are adjacent if $uu'\in E(G_1)$ and $vv'\in E(G_2)$. The direct product of $G_1$ and $G_2$ is denoted by $G_1\times G_2$.}
\end{definition}

Note that the direct product of two connected graphs can be a disconnected graph also. The direct product is also known as {\em tensor product} or {\em cardinal product} or {\em cross product} or {\em categorical product} or {\em Kronecker product}. The following theorem establishes the admissibility of weak IASI by the direct product of two weak IASI graphs.

\begin{theorem}
The direct product of two weak IASI graphs admits a weak IASI.
\end{theorem}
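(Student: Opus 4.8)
The plan is to exhibit an explicit weak IASI on $G_1\times G_2$ built from weak IASIs of the factors, then verify the defining condition edge by edge. Let $f_1$ be a weak IASI of $G_1$ and $f_2$ a weak IASI of $G_2$. The key structural fact to exploit is Theorem \ref{T-WUC}: being a weak IASI graph is equivalent to being bipartite or having a mono-indexed edge, and in any weak IASI every edge has at least one mono-indexed end. So I would first dispose of the easy case. If either $G_1$ or $G_2$ is bipartite, then $G_1\times G_2$ is bipartite (the direct product of a bipartite graph with any graph is bipartite, since a proper $2$-colouring of one factor pulls back to a proper $2$-colouring of the product), and hence admits a weak IASI by Theorem \ref{T-WUC}. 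This already covers a large portion of cases, and in particular handles the situation where the product is disconnected and one component is bipartite.

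For the remaining case, where neither factor is bipartite, each $G_i$ must have at least one mono-indexed edge under its weak IASI. Here I would define a labeling $g$ on $V(G_1\times G_2)$ by a product-type rule based on the cardinalities: declare $g(u,v)$ to be a singleton set unless \emph{both} $f_1(u)$ and $f_2(v)$ are non-singleton, in which case set-indexing numbers are allowed to multiply. More concretely, I would set $g(u,v)=f_1(u)+c\cdot f_2(v)$ for a suitably large scaling constant $c$ (so that the sumsets stay distinct and injectivity is preserved), but then restrict attention to the sub-labeling where at least one coordinate is mono-indexed. Since $G_1$ has a mono-indexed vertex $u_0$ and $G_2$ has a mono-indexed vertex $v_0$, one can arrange that along each copy the "active" vertices form a pattern guaranteeing that every edge $(u,v)(u',v')$ of the product — which requires $uu'\in E(G_1)$ and $vv'\in E(G_2)$ — has an endpoint that is mono-indexed. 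The honest way to force this is: choose the labeling so that $(u,v)$ is mono-indexed whenever $u$ is a mono-indexed vertex of $G_1$ \emph{or} $v$ is a mono-indexed vertex of $G_2$; for the remaining vertices (both coordinates non-singleton) use disjoint translated copies to keep $f^+$ injective. Then for any product edge, since $uu'\in E(G_1)$ forces one of $u,u'$ mono-indexed in $G_1$, the corresponding product endpoint is mono-indexed, so $|g^+|=\max(|g(u,v)|,|g(u',v')|)$ holds on that edge.

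The main steps, in order, are: (i) reduce to the non-bipartite-factors case via Theorem \ref{T-WUC}; (ii) fix weak IASIs $f_1,f_2$ and observe each factor has a mono-indexed vertex; (iii) define $g$ on the product so that a vertex is mono-indexed as soon as either coordinate is mono-indexed in its factor, and use large disjoint translates on the fully non-singleton part; (iv) check injectivity of $g$ and of the induced $g^+$; (v) verify the weak condition $|g^+(e)|=\max$ of endpoint set-indexing numbers on every edge, splitting into the cases according to which endpoint is mono-indexed. Step (iii)–(iv), namely choosing the translates so that $g$ and $g^+$ remain injective across all $n_1 n_2$ vertices and all product edges simultaneously, is where the real work lies; a clean choice is to index the copies of $G_1$ by the vertices of $G_2$ and shift the $j$-th copy by a multiple of a number exceeding all labels used so far, mirroring the construction used above for the Cartesian product.

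The step I expect to be the main obstacle is verifying that every edge of $G_1\times G_2$ genuinely has a mono-indexed endpoint under $g$ — this is more delicate than in the Cartesian product, because an edge of the direct product simultaneously moves in both coordinates, so one cannot simply inherit the "one mono-indexed end per edge" property from a single factor. The resolution is the observation that $uu'\in E(G_1)$ already guarantees one of $u,u'$ is mono-indexed in $G_1$ (since $f_1$ is a weak IASI), and our rule makes the corresponding product vertex mono-indexed regardless of the second coordinate; so the property transfers, and the scaling/translation bookkeeping is then routine.
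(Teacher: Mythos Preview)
Your proposal is correct, and its decisive observation is the same as the paper's: an edge of $G_1\times G_2$ projects to an edge $uu'$ of $G_1$, and since $f_1$ is a weak IASI one of $u,u'$ is mono-indexed there, which forces the corresponding product vertex to be mono-indexed under your rule. That is exactly how the weak condition is secured in both arguments.

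Where you differ from the paper is in the surrounding scaffolding. The paper dispenses with the bipartite case split entirely and never invokes Theorem~\ref{T-WUC}; it also ignores $f_2$ completely. It simply takes the $n_2$ vertex-layers $V_j=\{(u_i,v_j):1\le i\le n_1\}$ and labels $V_j$ by an integral multiple (depending on $j$) of the $f_1$-labels of $G_1$. Thus a product vertex is mono-indexed if and only if its first coordinate is mono-indexed in $G_1$, and the edge check goes through by the projection argument above; injectivity is handled by choosing distinct multipliers for distinct layers. Your rule ``mono-indexed unless \emph{both} coordinates are non-singleton'' produces strictly more mono-indexed vertices and requires you to manage the non-singleton block separately via translates, which is extra bookkeeping that the paper avoids. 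Likewise, your preliminary reduction to the non-bipartite case is sound but unnecessary: the direct construction works uniformly, bipartite or not. In short, both proofs hinge on the same projection-to-$G_1$ idea; the paper's version is leaner because it uses only one factor's IASI and a single scaling device, while yours buys nothing extra for the added case analysis and two-coordinate rule.
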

\begin{proof}
Let $G_1$ and $G_2$ be two weak IASI graphs on $n_1$ and $n_2$ vertices, $m_1$ and $m_2$ edges respectively. Let $V(G_1)=\{u_1,u_2,u_3,\ldots u_{n_1}\}$ and $V(G_2)=\{v_1,v_2,v_3,\ldots v_{n_2}\}$. Make $n_2$ copies of $V(G_1)$, denoted by $V_j=\{u_{ij}: 1\le j\le n_2\}$. Since the vertex $u_{ij}$ is adjacent to a vertex $u_{rs}$ if $u_i$ and $u_r$ are adjacent $G_1$ and $u_j$ and $u_s$ are adjacent in $G_1$, no vertices in the copy $V_j$ can be adjacent to each other in $G_1\times G_2$. Hence, define a function $f_j$ on the vertex set $V_j$ such that it assigns the set-labels to the vertices of $V_j$, which are integral multiples of the set-labels of the corresponding vertices of $G_1$. Clearly, no two adjacent edges in $G_1\times G_2$ have non-singleton set-labels. Therefore, this labeling is a weak IASI on $G_1\times G_2$.
\end{proof}

%The sparing number of the direct product of two weak IASI graphs is determined in the following result. 

%\begin{theorem}
%Let $G_1$ and $G_2$ be two weak IASI graphs on $n_1$ and $n_2$ vertices, $m_1$ and $m_2$ edges respectively. Let the sparing numbers of $G_1$ and $G_2$  be $r_1$ and $r_2$ respectively. Then, the sparing number of $G_1\times G_2$ is $max~(n_2r_1, n_1r_2)$.
%\end{theorem}
%\begin{proof}

%\end{proof}

Next, recall the definition of the strong product of two graphs.

%\subsection{Strong Product of Weak IASI Graphs}

\begin{definition}\label{Defn-SP}
\cite{HIS} The {\em strong product} of two graphs $G_1$ and $G_2$ is the graph, denoted by $G_1\boxtimes G_2$, whose vertex set is $V(G_1) \times V(G_2)$ and for which the vertices $(u,v)$ and $(u',v')$ are adjacent if $[uu'\in E(G_1)~\text{and}~ v=v']$ or $[u=u' ~ \text{and}~vv'\in E(G_2)]$ or $[uu'\in E(G_1)$ and $vv'\in E(G_2)]$.
\end{definition}

From this definition, we understand that $E(G_1\boxtimes G_2)= E(G_1\Box G_2)\cup E(G_1\times G_2)$. Now, we prove the existence of weak IASI for the strong product of two weak IASI graphs in the following theorem.

\begin{theorem}
The strong product of two weak IASI graph admits a weak IASI.
\end{theorem}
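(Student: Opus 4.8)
The plan is to exhibit an explicit weak IASI on $G_1\boxtimes G_2$, using the identity $E(G_1\boxtimes G_2)=E(G_1\Box G_2)\cup E(G_1\times G_2)$ recorded after Definition~\ref{Defn-SP}. Fix weak IASIs $f$ of $G_1$ and $g$ of $G_2$, and let $N_1\subseteq V(G_1)$, $N_2\subseteq V(G_2)$ be the sets of vertices carrying non-singleton set-labels under $f$ and $g$ respectively. Since every edge of a weak IASI graph has a mono-indexed end vertex, $N_1$ is independent in $G_1$ and $N_2$ is independent in $G_2$. As in the Cartesian case, view $G_1\boxtimes G_2$ as $n_2$ copies $G_{1j}$ $(1\le j\le n_2)$ of $G_1$, with $G_{1j}$ corresponding to $v_j\in V(G_2)$, where $G_{1j}$ and $G_{1s}$ are joined --- by the straight Cartesian edges together with the \emph{diagonal} direct-product edges --- exactly when $v_jv_s\in E(G_2)$. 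Define a labelling $h$ on $V(G_1\boxtimes G_2)=V(G_1)\times V(G_2)$ as follows: if $v_j\in N_2$, label the copy $G_{1j}$ by an integral multiple $c_j\,f$ of $f$ (again a weak IASI of that copy, in which the vertex over $u_i$ is mono-indexed if and only if $u_i\notin N_1$); if $v_j\notin N_2$, give every vertex of $G_{1j}$ a singleton set-label. Choose the scalars $c_j$ and the singletons from a sufficiently sparse set so that all vertex labels and all induced sumset labels are pairwise distinct, i.e. so that $h$ and $h^+$ are injective.

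The key point is that the set of non-mono-indexed vertices of $h$ is exactly $N_1\times N_2$, and that this set is independent in $G_1\boxtimes G_2$: if $(u_i,v_j)$ and $(u_r,v_s)$ both lie in $N_1\times N_2$ and are adjacent, then by the three alternatives of Definition~\ref{Defn-SP} we obtain $u_iu_r\in E(G_1)$ with $u_i,u_r\in N_1$, or $v_jv_s\in E(G_2)$ with $v_j,v_s\in N_2$, or again $u_iu_r\in E(G_1)$ with $u_i,u_r\in N_1$; each contradicts the independence of $N_1$ or of $N_2$. Hence every edge $xy$ of $G_1\boxtimes G_2$ has a mono-indexed end, and then $\max(|h(x)|,|h(y)|)=|h(x)||h(y)|$, so the bound $\max(|h(x)|,|h(y)|)\le |h^+(xy)|\le|h(x)||h(y)|$ forces $|h^+(xy)|=\max(|h(x)|,|h(y)|)$ at once; together with the injectivity of $h$ and $h^+$ arranged above, $h$ is a weak IASI of $G_1\boxtimes G_2$. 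The verification that $h$ and $h^+$ can indeed be made simultaneously injective is the same ``integral multiples'' bookkeeping already used for the direct product, so I would not belabour it.

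The genuine obstacle here is the diagonal edges of the strong product. For the Cartesian product it suffices to alternate the ``type'' of the labelling between consecutive copies of $G_1$, because the extra edges join only consecutive copies; that parity device breaks down for $\boxtimes$, since a diagonal edge may join copies $G_{1j},G_{1s}$ with $v_jv_s$ lying on an odd cycle of $G_2$. The construction above sidesteps this by letting the independent sets $N_1,N_2$ of the factors --- rather than a two-colouring of the copies --- govern where non-singleton labels are permitted; it is precisely the independence of $N_1$ and of $N_2$ that forces $N_1\times N_2$ to be independent in $G_1\boxtimes G_2$, which is the heart of the argument. Everything else reduces to choosing the scaling constants and singleton values to grow quickly enough, exactly as in the preceding proofs.
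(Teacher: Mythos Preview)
Your argument is correct. The crucial step---that if $N_1$ and $N_2$ are the non-mono-indexed vertices of $G_1$ and $G_2$ respectively, then $N_1\times N_2$ is independent in $G_1\boxtimes G_2$---is verified exactly as you say, and once that is in hand the rest is the usual ``integral multiples'' bookkeeping, at the same level of rigour the paper uses elsewhere.

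Your route differs from the paper's. The paper proceeds greedily: it labels the first copy $G_{1,1}$ as in $G_1$, then in each copy adjacent to an already-labelled one allows a non-singleton label at $u_{ij}$ only when the corresponding vertex \emph{and all its neighbours} in the previously labelled copy are mono-indexed, and iterates to the next copy not yet reached. You instead isolate a single structural fact (independence of $N_1\times N_2$) and let it dictate the labelling globally: copies over $N_2$ carry scaled copies of $f$, all other copies are all-singleton. Your argument is cleaner and makes transparent why the diagonal edges cause no trouble; the paper's copy-by-copy scheme in principle plants more non-singleton labels (it may give non-singleton labels even inside copies over $V(G_2)\setminus N_2$), which could matter for the sparing number, but since neither proof tracks $\varphi$ this buys nothing for the present theorem.
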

\begin{proof}
Let $G_1$ and $G_2$ be two weak IASI graphs on $n_1$ and $n_2$ vertices with the corresponding weak IASIs $f_1$ and and $f_2$ respectively. Let $G=G_1\boxtimes G_2$. Then, $G$ can be viewed as follows.
Take $n_2$ copies of $G_1$, denoted by $G_{1i}$, for $1\le i \le n_2$. Let $u_{ij}$ be the $i$-th vertex of the $j$-th copy of $G_1$, where $1\le i \le n_1, 1\le j \le n_2$. If a copy $G_{1j}$ is adjacent to another copy $G_{1k}$ in $G$, then the vertex $u_{ij}$ will be adjacent to the vertices $u_{i,k}, u_{i+1, j}, u_{i-1,j}$, if they exist. 

Let $f:V(G)\to \mathcal{P}(\mathbb{N}_0)$ which label the vertices of $G$ in the following way. Label the corresponding vertices of the first copy $G_{1\,1}$ of $G_1$ by the same set-labels of the vertices of $G_1$. Now, by Definition \ref{Defn-SP}, a vertex of the copies of $G_1$ that are adjacent to $G_{1\,1}$ can have a non-singleton set label if and only if the corresponding vertex and its adjacent vertices in $G_{1\,1}$ are mono-indexed. Let $G_{1r}$ be the next copy of $G_1$ which is not adjacent to $G_{1\,1}$. Label the vertices of this copy by an integral multiple of the set-labels of the corresponding vertices of $G_1$ and label the vertices of adjacent copies of $G_{1r}$ such that no vertex of $G_{1r}$ has a non-singleton set-label unless the corresponding vertex and its adjacent vertices in $G_{1r}$ are mono-indexed. Proceed in this way until all the vertices in $G$ are set-labeled. Then, we have a set-labeling in which no two adjacent vertices of $G$ have non-singleton set-labels. Hence, $f$ is a weak IASI on $G=G_1\boxtimes G_2$. This completes the proof.
\end{proof}

%In the following section, we discuss the admissibility of weak IASI by some other graph products. 

\section{Other Products of Weak IASI Graphs}

In the previous section, we have discussed the admissibility of weak IASI by three fundamental products of weak IASI graphs. Now, we proceed to discuss the existence of weak IASI for certain other graph products.

%\subsection{Lexicographic Products of Weak IASI graphs}
Now, recall the definition  of lexicographic product of two graphs.

\begin{definition}
\cite{IK} The {\em lexicographic product} or {\em composition} of two graphs $G_1$ and $G_2$ is the graph, denoted by $G_1\circ G_2$, is the graph whose vertex set $V(G_1)\times V(G_2)$ and for two vertices $(u,v)$ and $(u',v')$ are adjacent if $[uu'\in E(G_1)]$ or $[u=u'~\text{and}~ vv'\in E(G_2)]$.
\end{definition}

Admissibility of weak IASI by the lexicographic product of two weak IASI graphs is established in the following theorem. 

\begin{theorem}
The lexicographic product of two weak IASI graph admits a weak IASI.
\end{theorem}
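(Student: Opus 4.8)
The plan is to exploit the block structure of the lexicographic product. Write $V(G_1)=\{u_1,u_2,\ldots,u_{n_1}\}$ and view $G=G_1\circ G_2$ as $n_1$ copies $H_1,H_2,\ldots,H_{n_1}$ of $G_2$, where $H_i$ is the copy lying over $u_i$: two vertices inside the same $H_i$ are adjacent exactly when the corresponding vertices of $G_2$ are adjacent, and for $i\neq j$ every vertex of $H_i$ is adjacent to every vertex of $H_j$ precisely when $u_iu_j\in E(G_1)$, with no edges between $H_i$ and $H_j$ otherwise. Thus every edge of $G$ is either an edge inside some copy $H_i$ or a ``cross edge'' joining two copies $H_i,H_j$ with $u_iu_j\in E(G_1)$.

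The crucial observation is that, since $G_1$ admits a weak IASI $f_1$ and every edge of a weak IASI graph has a mono-indexed end vertex, the set $S\subseteq V(G_1)$ of vertices that are \emph{not} mono-indexed under $f_1$ is an independent set of $G_1$. I would let $S$ govern the labeling of $G$: for $u_i\notin S$, label all vertices of the copy $H_i$ by distinct singleton subsets of $\mathbb{N}_0$; for $u_i\in S$, label the vertices of $H_i$ by a suitably scaled copy (an integral multiple) of a fixed weak IASI $f_2$ of $G_2$, using distinct scalings for distinct copies exactly as in the earlier product proofs of this paper. With this labeling, an edge inside $H_i$ with $u_i\notin S$ has both ends mono-indexed; an edge inside $H_i$ with $u_i\in S$ satisfies $|f^+(xy)|=\max(|f(x)|,|f(y)|)$ because the restriction of the labeling to $H_i$ is a scaled copy of the weak IASI $f_2$ (scaling by a positive integer preserves all cardinalities); and a cross edge between $H_i$ and $H_j$ with $u_iu_j\in E(G_1)$ has a mono-indexed end, since independence of $S$ forces at least one of $u_i,u_j$ — say $u_j$ — to lie outside $S$, so every vertex of $H_j$, in particular the end of that edge, is mono-indexed. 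Hence the labeling meets the weak IASI condition on every edge of $G$.

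The one genuine technical point — which I would dispatch exactly as the proofs of the Cartesian, direct and strong product theorems do — is arranging the singleton labels and the integral-multiple scalings so that $f$ is injective on $V(G)$ and the induced $f^+$ is injective on $E(G)$. Vertex-injectivity comes for free once the scalings separate the numerical ranges of different copies and the singletons inside each mono-indexed copy are taken distinct, for instance distinct powers of a fixed prime, which additionally keeps their pairwise sumsets distinct. For $f^+$ one checks the edge types in turn: two edges inside a common copy are separated by the injectivity of the scaled $f_2$ (or of the distinct singletons) on that copy; edges in different copies are separated because the scalings keep their sumsets in disjoint ranges; and cross edges are separated by also encoding the unordered pair $\{i,j\}$ into the choice of scalings. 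I expect no real obstacle here beyond bookkeeping; the conceptual core of the argument is the reduction to the independent set $S$ carried out in the previous paragraph, and the rest can be packaged via the ``integral multiple'' device rather than spelled out arithmetically.
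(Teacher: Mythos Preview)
Your proposal is correct and follows essentially the same approach as the paper: view $G_1\circ G_2$ as $n_1$ copies of $G_2$, give the copies lying over an independent set of $G_1$ a scaled (integral-multiple) version of the weak IASI $f_2$, and label all remaining copies entirely by singletons. The only cosmetic difference is that you extract the independent set as $S=\{u_i:|f_1(u_i)|>1\}$ from the given weak IASI of $G_1$, whereas the paper builds it greedily (label $G_{2\,1}$ fully, singleton-label its neighbours, pass to the next copy not yet adjacent to a full one, and repeat); your version is if anything cleaner and makes more explicit use of the hypothesis on $G_1$, and you address the injectivity bookkeeping more carefully than the paper does.
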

\begin{proof}
Let $G_1$ and $G_2$ be two weak IASI graphs on $n_1$ and $n_2$ vertices respectively. The composition of $G_1$ and $G_2$ can be viewed as follows. Take $n_1$ copies of $G_2$, denoted by $G_{2i};~1\le i \le n_1$. Every vertex of a copy $G_{2i}$ is adjacent to all vertices of another copy $G_{2j}$ in $G_1\circ G_2$ if the corresponding vertices $v_i$ and $V_j$ are adjacent in $G_1$.

Label the corresponding vertices of the first copy $G_{2\,1}$ of $G_2$ by the same set-labels of the vertices of $G_2$. Since every vertex of $G_{2\,1}$ is adjacent to all vertices of its adjacent copies, these vertices must be labeled by distinct singleton sets. Now, label the vertices of the next copy $G_{2r}$ of $G_2$ which is not adjacent to $G_{2\,1}$  by an integral multiple of the set-labels of the corresponding vertices of $G_2$ and label the vertices of the adjacent copies of $G_{2r}$ by singleton sets. Proceed in this way until all the vertices in $G$ are set-labeled. This set-labeling is a weak IASI for $G_1\circ G_2$.
\end{proof}

Next, the graph product we are going to discuss is the corona of two weak IASI graphs.

%\subsection{Corona of Weak IASI Graphs}

\begin{definition}\label{D-5.1}{\rm
\cite{FH} By {\em corona} of two graphs $G_1$ and $G_2$, denoted by $G_1\odot G_2$, is the graph obtained by taking one copy of $G_1$ (which has $p_1$ vertices) and $p_1$ copies of $G_2$ and then joining the $i$-th point of $G_1$ to every point in the $i$-th copy of $G_2$. The number of vertices and edges in $G_1\odot G_2$ are $p_1(1+p_2)$ and $q_1+p_1q_2+p_1p_2$ respectively, where $p_i$ and $q_i$ are the number of vertices and edges of the graph $G_i, i=1,2$.}
\end{definition}

The following theorem establishes a necessary and sufficient condition for the corona of two  weak IASI graphs to admit a weak IASI.

\begin{theorem}\label{T-NMIEG}
Let $G_1$ and $G_2$ be two weak IASI graphs on $m$ and $n$ vertices respectively. Then,
\begin{enumerate}
\item[(i)] $G_1\odot G_2$ admits a weak IASI if and only if either $G_1$ is $1$-uniform or it has $r$ copies of $G_2$ that are 1-uniform, where $r$ is the number of vertices in $G_1$ that are not mono-indexed. 
\item[(ii)] $G_2\odot G_1$ admits a weak IASI if and only if either $G_2$ is $1$-uniform or it has $l$ copies of $G_2$ that are $1$-uniform, where $l$ is the number of vertices in $G_2$ that are not mono-indexed.
\end{enumerate} 
\end{theorem}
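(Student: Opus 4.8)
The plan is to rely on the one structural fact behind every weak IASI, namely that \emph{every edge must have at least one mono-indexed end vertex} (Theorem~\ref{T-WUC} and the remark following it). In the corona $G_1\odot G_2$ the $i$-th vertex $v_i$ of $G_1$ is joined to \emph{all} $n$ vertices of the $i$-th copy $G_2^{(i)}$ of $G_2$, so these $mn$ connecting edges are precisely where the labelling of $G_1$ interacts with the labellings of the copies of $G_2$. I would prove (i) in full and then read off (ii) by interchanging the roles of $G_1$ and $G_2$, since $G_2\odot G_1$ is described in the same way with $m$ and $n$ swapped and with $l$ playing the role of $r$.

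For the necessity half of (i): suppose $f$ is a weak IASI of $G_1\odot G_2$. By Theorem~\ref{T-WSG} its restriction to the copy of $G_1$ is a weak IASI of $G_1$. If $v_i$ is a vertex of $G_1$ that is not mono-indexed under this restriction, then for every $u\in V(G_2^{(i)})$ the edge $v_iu$ forces $u$ to be mono-indexed, whence the entire copy $G_2^{(i)}$ is mono-indexed, i.e.\ $1$-uniform. Letting $r$ be the number of non-mono-indexed vertices of $G_1$, we obtain $r$ copies of $G_2$ that are $1$-uniform; and if $r=0$ then $G_1$ is itself $1$-uniform. This is the easy direction.

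For sufficiency I would build the labelling explicitly. Fix a weak IASI of the copy of $G_1$ (possible since $G_1$ is a weak IASI graph), and let $S$, $|S|=r$, be its set of non-mono-indexed vertices. Label each copy $G_2^{(i)}$ with $v_i\in S$ by mutually distinct singletons, chosen from a Sidon set of non-negative integers so that the induced edge sumsets remain distinct — every graph admits such a $1$-uniform weak IASI; and label each remaining copy of $G_2$ by a weak IASI of $G_2$ whose set-labels are taken to be suitably large integral multiples of a fixed weak IASI of $G_2$, in the sense defined in the Introduction. Then one verifies edge by edge that $|f^+(uv)|=\max(|f(u)|,|f(v)|)$: edges inside the copy of $G_1$ and inside any copy of $G_2$ are fine because each restriction is a weak IASI, while an edge $v_iu$ with $u\in V(G_2^{(i)})$ is fine because either $v_i$ is mono-indexed, or $v_i\in S$ and then $G_2^{(i)}$ is $1$-uniform so $u$ is mono-indexed. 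The same construction, with $G_2$, $n$, $l$ in place of $G_1$, $m$, $r$, gives (ii).

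The main obstacle is not the cardinality identity $|f^+(uv)|=\max(|f(u)|,|f(v)|)$, which follows mechanically from the above bookkeeping, but the global \emph{injectivity} of $f^+$: sumsets arising on edges inside the copy of $G_1$, inside the various copies of $G_2$, and on the connecting edges must all be pairwise distinct. I expect to handle this exactly as in the product theorems of the previous section — by passing from the original set-labels of each successive copy to sufficiently separated integral multiples of them (and, for the $1$-uniform copies, by drawing all the singletons from one common Sidon set), so that no two distinct edges can receive the same sumset. Checking that such a choice is always available and is compatible with the weak-IASI requirement on each individual piece is the single point where a little care is needed.
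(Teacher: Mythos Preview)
Your proposal is correct and follows essentially the same route as the paper: both argue necessity by observing that the connecting edges $v_iu$ force every vertex of $G_2^{(i)}$ to be mono-indexed whenever $v_i$ is not, and both argue sufficiency by patching together a weak IASI on $G_1$ with weak IASIs (respectively $1$-uniform labellings) on the copies of $G_2$ according to whether the attaching vertex is mono-indexed or not. The only notable difference is that you explicitly flag and address the global injectivity of $f^+$ via Sidon sets and separated integral multiples, whereas the paper's proof leaves this point implicit; your version is in this respect the more careful one.
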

\begin{proof}

Consider the corona $G_1\odot G_2$. Let $f$ be a weak IASI of $G_1$ and $f_i$ be a weak IASI on the $i$-th copy $G_{2i}$ of $G_2$ all whose vertices are connected to the $i$-th vertex of $G_1$.  Define the function $g$ on $G_1\odot G_2$ by 
\[ g(v)= \left\{
\begin{array}{l l}
 	f(v) & \quad \text{if $v\in G_1$}\\
    f_i(v) & \quad \text{if $v\in G_{2i}$},~ 1\le i \le m
 \end{array} \right.\] 

Assume that $G_1\odot G_2$ is a weak IASI graph. If $G_1$ is $1$-uniform, then the proof is complete. If $G_1$ is not $1$-uniform, then the vertex set $V$ of $G_1$ can be divided into two disjoint sets $V_1$ and $V_2$, where $V_1$ is the set of all mono-indexed vertices in $G_1$ and $V_2$ be the set of all vertices that are not mono-indexed in $G_1$. Then, we observe that any copy $G_{2i}$ of $G_2$ that are connected to the vertices of $V_2$ cannot have a vertex that is not mono-indexed. That is, $r$ copies of $G_2$ are $1$-uniform, where $r=|V_2|$. Hence, $G_1\odot G_2$ is a weak IASI graph implies $G_1$ is $1$-uniform or $r$ copies of $G_2$ are $1$-uniform, where $r$ is the number of vertices of $G_1$ that are not mono-indexed.

Conversely, either $G_1$ or $r$ copies of $G_2$ that are $1$-uniform, where $r$ is the number of vertices of $G_1$ that are not mono-indexed. If $G_1$ is $1$-uniform, then the vertices of $G_{2i}$ can be labeled alternately by distinct singleton sets and distinct non-singleton sets under $f_i$. If $G_1$ is not $1$-uniform, by hypothesis, $r$ copies of $G_2$ are $1$-uniform, where $r$ is the number of vertices of $G_1$ that are not mono-indexed. Label the vertices of path $G_{2i}$, which is adjacent to the vertex $v_i$ of $G_1$ that are not mono-indexed, by distinct singleton sets under $f_i$.  Hence, in both cases, $g$ is a set-indexer and hence a weak IASI for $G_1\odot G_2$.

\noindent The proof for the second part is similar.
\end{proof}

We now proceed to determine the sparing number of the corona of two graphs.

\begin{theorem}
Let $G_1$ and $G_2$ be two weak IASI graphs on $n_1$ and $n_2$ vertices, $m_1$ and $m_2$ edges and $r_1$ and $r_2$ mono-indexed vertices respectively. Then, the sapring number of $G_1\odot G_2$ is $r_1(1+r_2)+(n_1-r_1)m_2$ and the sparing number of $G_2\odot G_1$ is $r_2(1+r_1)+(n_2-r_2)m_1$.
\end{theorem}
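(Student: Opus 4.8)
The plan is to establish the formula for $\varphi(G_1\odot G_2)$ by a matching upper and lower bound; the statement for $\varphi(G_2\odot G_1)$ is symmetric (interchange the roles of $G_1$ and $G_2$), so I would write out only the first case. The single observation I would use throughout is that, in a weak IASI, an edge is mono-indexed if and only if \emph{both} of its end-vertices are mono-indexed. Hence counting mono-indexed edges reduces to choosing the set $M$ of mono-indexed vertices — which must be a vertex cover, equivalently $V\setminus M$ must be independent — and then counting the edges lying inside $M$. That any such $M$ is actually realisable by a weak IASI is handled exactly as in the earlier constructions of this paper: the vertices outside $M$ are labelled by non-singleton sets that are scaled translates of one fixed set, with the scale factors taken large and pairwise distinct across the different copies so that the induced edge-labels remain injective (the mechanism used, e.g., in Theorem~\ref{T-NMIEG}).

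For the upper bound I would fix a weak IASI of $G_1$ with exactly $r_1$ mono-indexed vertices, the remaining $n_1-r_1$ of them (an independent set) being non-mono-indexed, and then label $G_1\odot G_2$ one copy at a time, recalling from Definition~\ref{D-5.1} that the $i$-th vertex $v_i$ of $G_1$ is joined to every vertex of the $i$-th copy $G_{2i}$. If $v_i$ is not mono-indexed, then every vertex of $G_{2i}$ is adjacent to a non-mono-indexed vertex and so $G_{2i}$ must be made $1$-uniform, exactly as forced in Theorem~\ref{T-NMIEG}; such a copy then contributes all $m_2$ of its own edges as mono-indexed edges and no mono-indexed join edges, and there are $n_1-r_1$ of them, which yields the summand $(n_1-r_1)\,m_2$. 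If $v_i$ is mono-indexed, I would put on $G_{2i}$ a weak IASI of $G_2$ with as few mono-indexed vertices as possible, namely $r_2$; each such vertex $u$ gives one mono-indexed join edge $v_iu$, and one further mono-indexed edge on the subgraph induced by $\{v_i\}\cup V(G_{2i})$ is incurred (this subgraph contains a triangle as soon as $G_2$ has an edge, and a triangle with a single mono-indexed apex still carries a mono-indexed edge). Together with the edges of $G_1$ itself, the $r_1$ copies of this second type contribute $r_1(1+r_2)$. After checking that all these labels can be realised simultaneously (again by rescaling copy by copy), this produces a weak IASI of $G_1\odot G_2$ with exactly $r_1(1+r_2)+(n_1-r_1)m_2$ mono-indexed edges, so $\varphi(G_1\odot G_2)\le r_1(1+r_2)+(n_1-r_1)m_2$.

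For the lower bound I would take an arbitrary weak IASI of $G_1\odot G_2$, restrict it to $G_1$ and to each $G_{2i}$ — each restriction is again a weak IASI by Theorem~\ref{T-WSG} — and re-run the case analysis: a copy attached to a non-mono-indexed vertex of $G_1$ forces $m_2$ mono-indexed edges, while a copy attached to a mono-indexed vertex forces at least $1+r_2$ mono-indexed edges among its internal and join edges, and the restriction to $G_1$ forces at least $r_1$ mono-indexed vertices in $G_1$. Summing these bounds and minimising over the number $t$ of non-mono-indexed vertices of $G_1$ (which satisfies $t\le n_1-r_1$) should return the same value, at which point the two bounds meet. The step I expect to be the main obstacle is precisely this final minimisation: the number of mono-indexed edges lying \emph{inside} $G_1$ and the number lying inside a copy attached to a mono-indexed vertex both depend on \emph{which} vertices are selected to be mono-indexed, and these choices pull against one another — making $G_1$ internally cheap tends to drive more copies into the expensive $1$-uniform régime, and an analogous tension arises inside each individual copy (between suppressing mono-indexed edges of $G_2$ and suppressing mono-indexed join edges). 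A careful exchange/monotonicity argument is therefore needed to show the optimum is attained exactly when $G_1$ is labelled with $n_1-r_1$ mono-indexed vertices and each remaining copy with $r_2$, and that the independence and $f^+$-injectivity constraints across all the copies are compatible with this optimal choice.
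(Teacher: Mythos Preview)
The paper's proof is much shorter and does not split into matching bounds. It simply fixes the given weak IASI on $G_1$, invokes Theorem~\ref{T-NMIEG} to conclude that the $(n_1-r_1)$ copies of $G_2$ attached to non-mono-indexed vertices of $G_1$ must be $1$-uniform, labels the remaining $r_1$ copies by integral multiples of the set-labels of $G_2$, and then counts directly: the total is written as $r_1+(n_1-r_1)m_2+r_1r_2$ and regrouped. There is no separate optimality argument, no exchange or minimisation step, and the tension you describe in your last paragraph is never raised.

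Your own upper-bound bookkeeping contains a genuine error. The ``$+1$'' in $1+r_2$ is not produced by a triangle inside $\{v_i\}\cup V(G_{2i})$: that triangle argument neither yields exactly one additional mono-indexed edge nor corresponds to anything in the formula. In the paper's decomposition the three summands are an isolated $r_1$, the term $(n_1-r_1)m_2$ from the $1$-uniform copies, and $r_1r_2$ from the $r_1$ remaining copies, each of which the paper says ``contains the same number of mono-indexed edges as that of $G_2$''; thus the paper is reading $r_2$ as the count of \emph{internal} mono-indexed edges of a copy of $G_2$, whereas you attribute $r_2$ to \emph{join} edges and then conjure the extra $1$ from triangles. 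You also omit the internal mono-indexed edges of the non-$1$-uniform copies entirely from your tally. Before attempting the lower bound you should redo this count so that it matches the claimed formula term by term; as it stands, your construction does not actually produce $r_1(1+r_2)+(n_1-r_1)m_2$ mono-indexed edges.
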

\begin{proof}
Since $G_1$ has $r_1$ mono-indexed vertices, $(n_1-r_1)$ copies of $G_2$ must be $1$-uniform in $G_1\odot G_2$. In the remaining $r_1$ copies, label the vertices by the set-labels which are some integral multiples of the  set-labels of  the corresponding vertices of $G_2$ (in such a way that no two copies of $G_2$ have the same set of set-labels). Hence, each of these copies contains the same number of mono-indexed edges as that of $G_2$. Therefore, the total number of mono-indexed edges in $G_1\odot G_2$ is $r_1+(n_1-r_1)m_2+r_1r_2=r_1(1+r_2)+(n_1-r_1)m_2$.

\noindent Similarly, we can prove the other part also.
\end{proof}

%\subsection{Rooted Product of Weak IASI Graphs}

Another interesting graph product is the root product of two graphs. Recall the definition of rooted product of two weak IASI graphs.

\begin{definition}{\rm 
\cite{GM} The {\em rooted product} of a graph $G_1$ on $n_1$ vertices and rooted graph $G_2$ on $n_2$ vertices, denoted by $G_1\circ G_2$, is defined as the graph obtained by taking $n_1$ copies of $G_2$, and for every vertex $v_i$ of $G_1$ and identifying $v_i$ with the root node of the $i$-th copy of $G_2$. }
\end{definition}

The following theorem verifies the admissibility of weak IASI by the rooted product of two graphs.

\begin{theorem}
The rooted product of two weak IASI graphs is also a weak IASI graph.
\end{theorem}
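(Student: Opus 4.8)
The plan is to mimic the strategy used for the other products in this paper, exploiting the structural description of the rooted product as $n_1$ copies of $G_2$ glued to $G_1$ along single identified vertices. First I would set up notation: let $G_1$ be a weak IASI graph on $n_1$ vertices with weak IASI $f$, and let $G_2$ be a rooted weak IASI graph on $n_2$ vertices with root vertex $w$; denote by $G_{2i}$ the $i$-th copy of $G_2$ whose root is identified with the vertex $v_i$ of $G_1$. The key structural observation is that, unlike the Cartesian, direct, strong or lexicographic products, the rooted product has a very weak ``interface'': the only vertices shared between $G_1$ and a copy $G_{2i}$ is the single identified root vertex $v_i$, and distinct copies $G_{2i}, G_{2j}$ share no vertex at all. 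Consequently, once the labels on $G_1$ are fixed, each copy $G_{2i}$ is constrained only through the label already assigned to its root $v_i$.

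The main steps, in order, would be: (1) put the weak IASI $f$ of $G_1$ on the $G_1$-part of the rooted product, so that every edge of $G_1$ still has a mono-indexed end; (2) for each $i$, extend $f(v_i)$ to a weak IASI $f_i$ of the copy $G_{2i}$ subject to the prescribed label on the root $v_i$. For step (2) there are two cases. If $v_i$ is mono-indexed in $G_1$ (i.e.\ $|f(v_i)|=1$), then the root already carries a singleton label, and since $G_2$ admits a weak IASI we may take any weak IASI of $G_2$ and, if necessary, translate/scale it by an appropriate non-negative integer so that the root receives exactly the set $f(v_i)$ while injectivity across all copies is preserved; because the root is mono-indexed, the edges of $G_{2i}$ incident to the root are automatically fine, and the rest of $G_{2i}$ inherits the weak-IASI property. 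If $v_i$ is not mono-indexed, then $f(v_i)$ is a non-singleton set; in that case I would label every neighbour of $v_i$ in $G_{2i}$ by a singleton set, and more generally label the whole copy $G_{2i}$ by an integral multiple of a weak IASI of $G_2$ in which the root's neighbours are all mono-indexed — which is possible by Theorem \ref{T-WUC} after, if needed, declaring one edge of $G_2$ mono-indexed. As in the proofs of the earlier theorems, the use of distinct integral multiples for distinct copies guarantees that the induced edge function $f^+$ is injective, so the resulting labeling is genuinely an IASI, and by construction $|f^+(e)|=\max(|f(x)|,|f(y)|)$ on every edge $e=xy$, whether $e$ lies in $G_1$, lies inside some $G_{2i}$, or is incident to an identified root.

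The main obstacle — really the only subtle point — is step (2) in the case where $v_i$ is a non-mono-indexed vertex of $G_1$ \emph{and} the root $w$ of $G_2$ has neighbours in $G_2$ that cannot all be made mono-indexed while keeping $G_2$ a weak IASI graph (for instance if $w$ sits in an odd cycle of $G_2$ with a forced structure). Strictly speaking this never actually fails — one can always sacrifice an extra edge to a mono-indexed status by Theorem \ref{T-WUC} and Theorem \ref{T-WUOC} — but it is where the argument must be careful, since we are not free to change the label of the root once $G_1$ has been labeled. I would handle it by first choosing the weak IASI $f$ of $G_1$ so as to \emph{minimise} the number of non-mono-indexed vertices (or, in the worst case, taking $f$ to be $1$-uniform on $G_1$, which is always admissible), thereby making the interface constraint on each copy as mild as possible; after that, each $G_{2i}$ is labeled independently and the verification that $f^+$ is injective and weak reduces to the same routine edge-by-edge check already performed for the corona and lexicographic products above.
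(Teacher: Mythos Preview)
Your proposal is correct and follows essentially the same approach as the paper: fix a weak IASI on $G_1$, label each copy $G_{2i}$ by a distinct integral multiple of a weak IASI of $G_2$ to guarantee global injectivity, and then resolve the clash at each identified root by a case split on whether that vertex is mono-indexed. The paper handles the root conflict from the other side---it may override the $G_1$-label at $u_r$ by the root's singleton label rather than adapting the copy's labeling as you do---but this is the same mechanism, and your more explicit case analysis (including the fallback of making a copy, or $G_1$ itself, $1$-uniform) covers exactly the situations the paper's rule is meant to handle.
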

\begin{proof}
Let $G_1$ and $G_2$ be the given graphs with the weak IASIs $f_1$ and $f_2$ defined on them respectively. Also let $V(G_1)=\{u_1,u_2,u_3,\ldots, u_{n_1}\}$ be the vertex set of $G_1$ and let $V(G_2)=\{v_1,v_2,v_3,\ldots, v_{n_2}\}$ be the vertex set of $G_2$. Let $G=G_1\circ G_2$. Without loss of generality, let $v_1$ be the root vertex of $G_2$. Now, make $n_1$ copies of $G_2$, denoted by $G_{2r}, 1\le r \le n_1$,  with $V(G_{2r}=\{v_{1r},v_{2r},v_{3r},\ldots, v_{{n_2}r}\}$.

Define a function $f:V(G)\to \mathcal{P}(\mathbb{N}_0)$ with the following conditions.

\begin{enumerate}
\item For $1\le i \le n_1$, define a function $f_{2r}:V(G_{2r})\to \mathcal{P}(\mathbb{N}_0)$ such that $f_{2r}(v_{ir})= r*f_2(v_i)$, where $r*f_2(v_i)$ is the set obtained by multiplying the elements of the set-label $f_2(v_i)$ by the integer $r$.
\item The vertex $u_r'$ obtained by identifying the vertex $u_r$ of $G_1$ and the root vertex $v_{1r}$ of the $r$-th copy $G_{2r}$ of $G_2$ has the same set-label of $u_r$ unless $u_r$ has a non-singleton set label and $v_{1r}$ is mono-indexed. In this case , let $u_r'$ assumes the same set-label of $v_{1r}$. 
\end{enumerate}
Then, under $f$, no two adjacent vertices of $G$ have non-singleton set-labels. That is, $f$ is a weak IASI on $G=G_1\circ G_2$. This completes the proof.
\end{proof}

\section{Conclusion}

In this paper, we have discussed the admissibility of weak IASI by the certain products of two graphs which admit weak IASIs. Some problems in this area are still open, as we have not studied about the sparing number of the graph product (except corona) of two arbitrary graphs $G_1$ and $G_2$, in our present discussion. Uncertainty in the adjacency pattern of different graph makes this study complex. An investigation to determine the sparing number of different products of two arbitrary graphs in terms of their orders, sizes and the vertex degrees in each of them, seem to be fruitful. The admissibility of weak IASIs by certain other graph products is also worth studying. 

%\section*{Acknowledgments}
%We are grateful to the anonymous referees for their critical and constructive comments which helped us to improve the content of the manuscript.

\end{document}